\def\rr{{\mathbb R}}
\def\nn{{\mathbb N}}
\def\cb{{\mathcal B}}
\def\fz{\infty}
\def\ez{\epsilon}
\def\gz{{\gamma}}
\def\lf{\left}
\def\r{\right}
\def\wz{\widetilde}
\def\loc{{\mathop\mathrm{\,loc\,}}}
\def\bint{{\ifinner\rlap{\bf\kern.35em--}
\int\else\rlap{\bf\kern.45em--}\int\fi}\ignorespaces}
\def\bbint{{\ifinner\rlap{\bf\kern.35em--}
\hspace{0.078cm}\int\else\rlap{\bf\kern.45em--}\int\fi}\ignorespaces}
\newtheorem{thm}{Theorem}[section]
\newtheorem{lem}[thm]{Lemma}
\newtheorem{cor}[thm]{Corollary}
\newtheorem{prop}[thm]{Proposition}
\theoremstyle{definition}
\newtheorem{rem}[thm]{Remark}
\newtheorem{defn}[thm]{Definition}
\numberwithin{equation}{section}
\numberwithin{equation}{section}
\begin{document}

\arraycolsep=1pt

\title{\bf\Large Haj\l asz Gradients Are Upper Gradients
\footnotetext{\hspace{-0.35cm} 2010 \emph{Mathematics Subject Classification}. Primary 30L99;
Secondary 46G05, 46E35, 46E30, 42B35, 42B37.
\endgraf
{\it Key words and phrases}. Haj\l asz gradient, upper gradient,
metric measure space, Sobolev space
\endgraf
Renjin Jiang, Dachun Yang and Wen Yuan are partially supported
by the National Natural Science Foundation of China
(Nos. 11301029, 11171027, 11361020 and 11101038),
the Specialized Research Fund for the Doctoral Program of Higher Education
of China (No. 20120003110003) and the Fundamental Research Funds for Central
Universities of China (Nos. 2012LYB26, 2013YB60 and 2012CXQT09).
Nageswari Shanmugalingam is partially supported by
grant \#DMS-1200915 from the National Science Foundation (U.S.A.).
}}
\author{Renjin Jiang, Nageswari Shanmugalingam, Dachun Yang\,\footnote{Corresponding
author.}\ \ and Wen Yuan}
\date{}
\maketitle

\vspace{-0.8cm}

\begin{center}
\begin{minipage}{13cm}
{\small {\bf Abstract}\quad
Let $(X, d, \mu)$ be a metric measure space, with $\mu$ a Borel regular measure.
In this article, we prove that, if $u\in L^1_\loc(X)$ and $g$ is a
Haj\l asz gradient of $u$, then there exists $\widetilde u$ such that
$\widetilde u=u$
almost everywhere and $4g$ is a $p$-weak
upper gradient of $\widetilde u$. This result avoids a priori assumption
on the quasi-continuity of $u$ used in [Rev. Mat. Iberoamericana 16 (2000), 243-279].
We also introduce
the notion of local Haj\l asz gradients, and investigate the relations between
the local Haj\l asz gradient
and the upper gradient.}
\end{minipage}
\end{center}

\vspace{0.1cm}

\section{Introduction\label{s1}}
\hskip\parindent
As a substitute for the classical gradient
in metric measure spaces, the Haj\l asz gradients were first introduced by
Haj\l asz \cite{h96} in 1996. This opened the door to
the study of Sobolev spaces on metric measure spaces.
Let $(X,d,\mu)$ be a metric measure space
with a {nontrivial Borel regular measure} $\mu$, which
is finite on bounded sets and positive on open sets.

\begin{defn}
Given a measurable function $u$  on $X$,
a non-negative measurable function $g$ on $X$ is called a
\textit{Haj\l asz gradient} of $u$
if there is a set $E\subset X$ with $\mu(E)=0$
such that, for all $x,\ y\in X\setminus E$,
\begin{equation}\label{HG}
|u(x)-u(y)|\leq d(x,y)[g(x)+g(y)].
\end{equation}
\end{defn}

In~\cite{h96} the above notion was employed  to introduce the Sobolev space
$M^{1,p}(X)$ for $p\in(1,\fz)$ on a metric measure space $(X,d,\mu)$. The Haj\l asz-Sobolev space $M^{1,p}(X)$
is defined to be the set of all functions $u\in L^p(X)$ having a  Haj\l asz gradient
$g\in L^p(X)$. The norm of this space is given by
$$\|u\|_{M^{1,p}(X)}:=\|u\|_{L^p(X)}+\inf_g\|g\|_{L^p(X)},$$
where the infimum is taken over all Haj\l asz gradients of $u$.

In 1998,  Heinonen and Koskela \cite{hk98} introduced
another type of gradients in metric measure spaces called upper gradients. Recall that
 a \emph{rectifiable curve} $\gamma$ is a continuous
mapping from an interval into $X$ with finite length.
In what follows,
$|\gz|$ denotes the image of $\gz$ in $X$.
The \textit{p-modulus} of a collection $\Gamma$ of curves
is defined by
\begin{equation*}
{\rm Mod}_p(\Gamma):=\inf_{\rho\in F(\Gamma)}\|\rho\|_{L^p(X)}^p,
\end{equation*}
where $F(\Gamma)$ is the collection of all non-negative Borel measurable functions $\rho$
on $X$ that satisfy $\int_\gamma\rho(s)\, ds\ge 1$ for each rectifiable curve $\gamma\in \Gamma$.

\begin{defn}
Let $u$ be a measurable function on $X$.
A non-negative Borel function $g$ on $X$ is called an \emph{upper gradient} of $u$ if
\begin{equation}\label{UG}
|u(\gz(a))-u(\gz(b))|\le \int_\gz g(s)\,ds
\end{equation}
holds true for all non-constant rectifiable curves $\gz:\ [a,b]\to X$.
If the inequality \eqref{UG} holds true for all non-constant rectifiable
curves in $X$ except a family of
curves of $p$-modulus zero, where $p\in [1,\fz)$, then
$g$ is called a \textit{$p$-weak upper gradient} of $u$.
\end{defn}

In the definition of  $p$-weak upper gradients, we may relax the Borel measurability of $g$ to
$\mu$-measurability of $g$; see, for instance, \cite[Section 1.5]{bb}.
 This is because by the assumption that $\mu$ is Borel regular, we have that, whenever $g$ is
$\mu$-measurable, there is a Borel measurable function $\wz g\ge g$ such that $\wz g=g$ almost everywhere.

Using the notion of
$p$-weak upper gradients introduced in
\cite{km98},
the Newton-Sobolev space (also called the Newtonian space)
$N^{1,p}(X)$, $p\in[1,\fz)$, on a metric measure space $X$
was introduced in~\cite{s00}.
Recall that the space $\wz N^{1,p}(X)$ is defined
to be the set of all functions $u\in L^p(X)$ having a $p$-weak upper gradient
$g\in L^p(X)$, equipped with the quasi-norm
$$\|u\|_{\wz N^{1,p}(X)}:=\|u\|_{L^p(X)}+\inf_g\|g\|_{L^p(X)},$$
where the infimum is taken over all $p$-weak upper gradients of $u$.
The \emph{Newton-Sobolev space} $N^{1,p}(X)$ is the quotient
space $\wz N^{1,p}(X)/\sim$, where the relation $\sim$ is defined by $u\sim v$ if and only if
$\|u-v\|_{\wz N^{1,p}(X)}=0$.

It is natural to compare these two notions of gradients. As shown by~\cite[Lemma 4.7]{s00},
a Haj\l asz gradient of a continuous function $u$, up to some modifications
on a set of measure zero, is an upper gradient; see also \cite{h03,hkst}. On the other hand, it is known that
an upper gradient may not be a  Haj{\l}asz gradient, even if the underlying space is well connected.
In general, one should think of a  Haj\l asz gradient of a function as the Hardy-Littlewood maximal
function of an upper gradient, if the underlying space supports a Poincar\'e inequality.

Based on this fact and the density of continuous functions
in the Haj{\l}asz-Sobolev spaces, it was shown in~\cite{s00} that Haj{\l}asz-Sobolev spaces are continuously
embedded into Newton-Sobolev spaces (see also \cite[Theorem 8.6]{h03}).
Furthermore, the argument there can easily be generalized to show the embedding from
function spaces defined via Haj{\l}asz gradients to the corresponding spaces based on upper gradients,
provided that {\it continuous functions are dense in the  Haj{\l}asz type spaces}; see,
for example, \cite{CJ,h03,hhp,Tu}. The above discussion holds true for Haj\l asz and Newtonian spaces based on
the function spaces $L^p(X)$. The goal of this article is to address the issue of whether Haj\l asz and
Newtonian spaces based on other types of quasi-Banach spaces, e.\,g. Morrey spaces, coincide,
in which whether continuous functions are dense or not is not known.

Our main result below shows that, in general, Haj{\l}asz gradients are upper gradients.

\begin{thm}\label{t1.1}
Let $u$, $g\in L^1_\loc(X)$. Suppose that $g$ is a Haj\l asz gradient of $u$.
Then there exist $\wz u,\, \wz g\in L^1_\loc(X)$ such that $u=\wz u$ and $g=\wz g$ almost everywhere and
$4\,\wz g$ is an upper gradient of $\wz u$.
\end{thm}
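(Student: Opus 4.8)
The plan is to reduce the upper gradient inequality, which is an assertion about curves, to a one-dimensional statement on the parameter interval, and to handle the $\mu$-null set $E$ by making $\wz g$ as large as possible there. First I would pass to a Borel set $N\supseteq E$ with $\mu(N)=0$ that also contains the (still $\mu$-null) set of points at which $g$ fails to be finite, and set $\wz g:=g$ off $N$ and $\wz g:=+\fz$ on $N$; since $\mu(N)=0$ this leaves $\wz g=g$ almost everywhere and $\wz g\in L^1_\loc(X)$, and it is harmless because upper gradients are permitted to be $[0,\fz]$-valued. With this choice, any rectifiable curve $\gz$ meeting $N$ in a set of positive arc length automatically has $\int_\gz\wz g\,ds=\fz$, so the inequality \eqref{UG} is trivial for it; thus I only need to treat curves $\gz:[0,\ell]\to X$, parametrized by arc length, with $\int_\gz\wz g\,ds<\fz$, and for such curves $\gz(t)\notin N$ for almost every $t\in[0,\ell]$.

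Fix such a curve and write $h:=u\circ\gz$ and $G:=g\circ\gz$, so that $G\in L^1([0,\ell])$ with $\int_0^\ell G\,dt=\int_\gz\wz g\,ds$. For almost every pair $s,t$ we have $\gz(s),\gz(t)\notin E$, and since $d(\gz(s),\gz(t))\le|s-t|$ by the arc length parametrization, the Haj\l asz inequality \eqref{HG} pulls back to the one-dimensional estimate
\begin{equation*}
|h(s)-h(t)|\le|s-t|\,[G(s)+G(t)]\qquad\text{for a.e. }s,t\in[0,\ell].
\end{equation*}
To bound $|h(\ell)-h(0)|$ I would telescope over a partition $0=t_0<\cdots<t_N=\ell$ whose points are chosen, via an averaging (or random-shift) argument, to be good parameters at which $G(t_i)$ is at most twice the average of $G$ over the adjacent subinterval; summing the pulled-back estimate then gives
\begin{equation*}
|h(\ell)-h(0)|\le\sum_{i=1}^N (t_i-t_{i-1})[G(t_{i-1})+G(t_i)]\le 4\int_0^\ell G\,dt+o(1).
\end{equation*}
Here the constant $4$ arises transparently as $2\times2$: one factor $2$ from the two terms $G(t_{i-1})+G(t_i)$ in \eqref{HG}, and a second factor $2$ from sampling $G$ at points where it is comparable to, rather than equal to, its local average. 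Letting the mesh tend to zero yields $|h(\ell)-h(0)|\le 4\int_\gz\wz g\,ds$ at the level of the pulled-back function.

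The remaining and most delicate task is to promote this into the genuine upper gradient inequality for a single, curve-independent modification $\wz u$ of $u$, defined at \emph{every} point, including the endpoints $\gz(0),\gz(\ell)$, which may lie in $N$. The subtlety is that $h=u\circ\gz$ need not have an honest limit at an endpoint, only a limit along sequences on which $G$ stays bounded; I would define the endpoint value as this restricted limit, check that it is independent of the approximating sequence (any two bounded-$G$ sequences are compared by the pulled-back estimate), and set $\wz u(x)$ equal to the common endpoint value over all curves of finite $\wz g$-integral terminating at $x$. The consistency of this definition across different curves is the crux: I would obtain it by concatenating two such curves at $x$ into a single curve $\sigma$ of finite $\wz g$-integral and showing that the two one-sided restricted limits of $u\circ\sigma$ at the interior parameter corresponding to $x$ coincide, both being comparable through the pulled-back estimate. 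Finally, using the same estimate at an arbitrary point $x\notin E$ with $g(x)<\fz$ shows that the endpoint value equals $u(x)$, so that $\wz u=u$ almost everywhere; combined with the bound of the previous paragraph this gives $|\wz u(\gz(0))-\wz u(\gz(\ell))|\le4\int_\gz\wz g\,ds$ for every rectifiable curve, i.e. $4\wz g$ is an upper gradient of $\wz u$. I expect this last step—constructing $\wz u$ pointwise everywhere so that it simultaneously matches the boundary values of all admissible curves while remaining measurable and equal to $u$ a.e.—to be the main obstacle, with the measure-zero set $E$ carrying curves being the source of the difficulty that the inflation of $\wz g$ is designed to neutralize.
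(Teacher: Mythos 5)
Your strategy is sound and, apart from one fixable slip noted below, genuinely different from the paper's. You share the opening move (as in Lemma~\ref{l1}, inflate $g$ to $+\infty$ on a Borel null set $N\supseteq E$, so that any curve whose parameter set $\gamma^{-1}(N)$ has positive measure satisfies the inequality trivially), but from there the mechanisms diverge. The paper never works with the pulled-back inequality directly: it exhausts $X$ by the sublevel sets $E_k=\{g\le 2^k\}$, on which $u$ is $2^{k+1}$-Lipschitz, applies the McShane extension (Lemma~\ref{l2}) to produce globally Lipschitz functions $u_k$ with Haj\l asz gradient $g_k=g\chi_{E_k}+2^{k+1}\chi_{X\setminus E_k}\le 2g$ on all of $X$, and obtains \eqref{UG_k} by differentiating $u_k\circ\gamma$ almost everywhere (Luzin plus the fundamental theorem of calculus); there the constant $4$ arises as $2$ (two gradient terms) times $2$ (from $g_k\le 2g$). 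Your telescoping with Chebyshev-selected partition points replaces the differentiation argument and produces the same $4=2\times 2$ intrinsically on each curve, with no extension lemma at all — a legitimate alternative for Step 1. The decisive structural difference is in Step 2: the paper sets $\widetilde u:=\limsup_k u_k$ on the bad set, a definition that is curve-independent and Borel measurable by construction, so no consistency question ever arises; you instead define $\widetilde u(x)$ by curvewise restricted limits and must prove consistency across curves. Your concatenation argument does work (the two one-sided limits at the junction parameter are compared through the pulled-back estimate, and reversing one curve makes the concatenation admissible), so your route closes — at the cost of exactly the patching labor that the $\limsup u_k$ device was designed to avoid. Measurability of your $\widetilde u$ is not a real obstacle, since it agrees with $u$ off the Borel null set $N$, under the usual completeness conventions for Borel regular outer measures.

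One step, as literally stated, would fail: the ``limit along sequences on which $G$ stays bounded.'' Such sequences need not exist: for $G(t)=1/(t\log^2 t)$ near $t=0$, $G$ is integrable but tends to $+\infty$ along \emph{every} sequence approaching $0$, so your restricted limit would be taken over an empty class and $\widetilde u$ at that endpoint would be undefined. The correct selection, which your own Chebyshev device already furnishes, is sequences $s_n\to a$ with $(s_n-a)\,G(s_n)\to 0$: by Chebyshev's inequality each window $(a+\delta/2,\,a+\delta)$ contains points $s$ with $G(s)\le (4/\delta)\int_{a+\delta/2}^{a+\delta}G(t)\,dt$, so such sequences exist whenever $\int_\gamma \widetilde g\,ds<\infty$, and along any two of them the pulled-back estimate shows $h(s_n)$ is Cauchy with a common limit; after this repair your well-definedness, concatenation, and $\widetilde u=u$ on $X\setminus N$ arguments run verbatim. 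Relatedly, your displayed telescoping inequality should not involve $h(0)$ and $h(\ell)$ directly, since $G$ may be $+\infty$ at the fixed endpoints: state the bound for good interior parameters $s,t$ and then pass to the (repaired) restricted limits to reach $|\widetilde u(\gamma(0))-\widetilde u(\gamma(\ell))|\le 4\int_\gamma \widetilde g\,ds$.
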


The proof of Theorem \ref{t1.1} is given in Section 2 and, in Section 3, we first introduce
the notion of local Haj\l asz gradients (see Definition~\ref{lhg}) and further show that
local Haj\l asz gradients are upper gradients. The key tool used to
prove Theorem~\ref{t1.1} is McShane's extension of
Lipschitz functions defined on subsets of $X$ (see Lemma~\ref{l2} below).
In Section 4, we apply Theorem~\ref{t1.1} to several concrete settings, including Morrey-Sobolev spaces
in which Lipschitz functions are not dense; see~\cite{lyy}.

\section{Proof of Theorem \ref{t1.1}}

\hskip\parindent
To prove Theorem \ref{t1.1}, we need two technical lemmas.
The following one follows from an easy argument; see \cite{s00} and \cite[Lemma 9.2.2]{hkst}.

\begin{lem}[Refinement of Haj\l asz gradients]\label{l1}
Let $u$, $g\in L^1_\loc(X)$. Suppose that $g$ is a Haj\l asz gradient of $u$.
Then there exists  a Borel function $\widehat g\in L^1_\loc(X)$ such that
$g=\widehat g$ almost everywhere and, for all $x,\,y\in X$,
\[
  |u(x)-u(y)|\le d(x, y)[\widehat g(x) + \widehat g(y)].
\]
\end{lem}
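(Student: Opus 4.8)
The plan is to produce $\widehat g$ by correcting $g$ on a single Borel null set that simultaneously captures (i) the points where the original Haj\l asz inequality \eqref{HG} may fail and (ii) the points where the given $\mu$-measurable $g$ fails to agree with a chosen Borel representative. First I would pass to Borel data. Since $g$ is $\mu$-measurable and $\mu$ is Borel regular, I can fix a Borel function $g_0$ with $g_0=g$ $\mu$-almost everywhere; moreover, by Borel regularity the given null set $E$ and the null set $\{g_0\ne g\}$ are each contained in Borel null sets, so their union is contained in a Borel set $N$ with $\mu(N)=0$. By construction $X\setminus N\subset X\setminus E$ and $g_0=g$ on $X\setminus N$, so \eqref{HG} holds for all $x,y\in X\setminus N$ with $g$ replaced by the Borel function $g_0$.

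Then I would simply define $\widehat g:=g_0$ on $X\setminus N$ and $\widehat g:=+\fz$ on $N$. This $\widehat g$ is a non-negative Borel function with values in $[0,\fz]$; it equals $g$ almost everywhere, since the two differ only inside the null set $N$; and it lies in $L^1_\loc(X)$ because, for every bounded measurable set $K$, one has $\int_K \widehat g\,d\mu=\int_{K\setminus N} g_0\,d\mu=\int_K g\,d\mu<\fz$, the null set $N$ contributing nothing to the integral.

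It then remains to check the pointwise inequality for every pair $x,y\in X$. If $x=y$ the left-hand side vanishes and there is nothing to prove. If $x,y\in X\setminus N$, the inequality is exactly the refined form of \eqref{HG} already arranged on $X\setminus N$. Finally, if at least one of $x,y$ lies in $N$ and $x\ne y$, then $d(x,y)>0$ while $\widehat g(x)+\widehat g(y)=+\fz$, so the right-hand side is $+\fz$ and the inequality holds automatically.

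The only point that genuinely requires care is that $\widehat g$ is allowed to take the value $+\fz$ on $N$: this is unavoidable in general (for instance, with $u=\chi_{\{x_0\}}$ and $g\equiv0$, which is a Haj\l asz gradient of $u$, no finite correction at $x_0$ can control $|u(x_0)-u(y)|/d(x_0,y)$ as $y\to x_0$), but it is harmless here because $N$ is null, so $\widehat g$ is finite almost everywhere and locally integrable, and $[0,\fz]$-valued non-negative Borel functions are precisely the class in which upper gradients live. Thus I expect the construction to be routine; the substantive observations are merely the reduction to a single Borel null set via Borel regularity and the admissibility of the value $+\fz$ on that set.
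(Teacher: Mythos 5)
Your proof is correct and takes essentially the same route as the paper: the paper likewise sets $\widehat g=g_0\chi_{X\setminus E_0}+\infty\chi_{E_0}$ for a Borel null set $E_0\supset E$ and a Borel representative of $g$, so the value $+\infty$ on a Borel null set is exactly the paper's device too. The only cosmetic difference is that the paper picks the Borel representative with $g_0\ge g$ everywhere (so only $E$ needs enlarging), whereas you instead absorb the set $\{g_0\ne g\}$ into the exceptional Borel null set --- both handle the same point and are equally valid.
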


\begin{proof}
From the definition of Haj\l asz gradient, there is a set $E\subset X$ with $\mu(E)=0$
such that, for all $x,\ y\in X\setminus E$,
\begin{equation*}
|u(x)-u(y)|\leq d(x,y)[g(x)+g(y)].
\end{equation*}
Choosing $\widehat g=g_0\chi_{X\setminus E_0}+\infty \chi_{E_0}$, where $E_0\supset E$ is a Borel set such that
$\mu(E_0)=0$ and $g_0\ge g$ is a Borel function such that $g=g_0$ almost everywhere
(see~\cite[Proposition 1.1]{bb}), gives the desired Haj\l asz gradient.
\end{proof}

The following McShane extension result can be found  in
\cite{mc}; see also \cite[Theorem 6.2]{he}.
Recall that a real-valued function $f$ on a metric space $(X,d)$
is \emph{$L$-Lipschitz} for some $L>0$ if,
for all $x$, $y$ in $X$, we have
$|f(x)-f(y)|\le L d(x,y)$.

\begin{lem}[McShane extension]\label{l2}
Let $A\subset X$ and $L\ge 1$. Suppose that $f:\,A\to \rr$ is an $L$-Lipschitz
function. Then the
function $F:\,X\to \rr$, given by $F(x):= \inf_{y\in A}\lf\{f(y)+Ld(x,y)\r\}$
for $x\in X$, is $L$-Lipschitz on $X$ with
$F=f$ on $A$.
\end{lem}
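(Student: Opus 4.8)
The plan is to verify, one at a time, the three assertions packaged into the statement: that $F$ is everywhere finite, that $F$ restricts to $f$ on $A$, and that $F$ is $L$-Lipschitz on all of $X$. All three will follow from the triangle inequality together with the $L$-Lipschitz bound on $f$, so the argument is elementary; the only place that requires a moment's care is checking that the defining infimum is not $-\fz$. To handle that first, I would fix an arbitrary basepoint $y_0\in A$ and combine the Lipschitz bound $f(y)\ge f(y_0)-Ld(y,y_0)$ with $d(y,y_0)\le d(y,x)+d(x,y_0)$ to get, for every $y\in A$ and every $x\in X$,
\[
f(y)+Ld(x,y)\ge f(y_0)-Ld(x,y_0).
\]
This lower bound is uniform in $y$, so $F(x)\ge f(y_0)-Ld(x,y_0)>-\fz$, and since each admissible term $f(y)+Ld(x,y)$ is finite, we conclude $F(x)\in\rr$ for all $x$.

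Next I would show that $F=f$ on $A$. For $x\in A$, the choice $y=x$ is admissible in the infimum and yields $F(x)\le f(x)+Ld(x,x)=f(x)$. For the reverse inequality, the $L$-Lipschitz property of $f$ gives $f(y)\ge f(x)-Ld(x,y)$ for every $y\in A$, hence $f(y)+Ld(x,y)\ge f(x)$; taking the infimum over $y\in A$ produces $F(x)\ge f(x)$. Combining the two bounds gives $F(x)=f(x)$ for each $x\in A$, as required.

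Finally I would establish the Lipschitz estimate on $X$. For $x_1,x_2\in X$ and any $y\in A$, the triangle inequality $d(x_1,y)\le d(x_2,y)+d(x_1,x_2)$ gives
\[
f(y)+Ld(x_1,y)\le \lf[f(y)+Ld(x_2,y)\r]+Ld(x_1,x_2).
\]
Taking the infimum over $y\in A$ on both sides yields $F(x_1)\le F(x_2)+Ld(x_1,x_2)$, and interchanging the roles of $x_1$ and $x_2$ gives the symmetric inequality, so that $|F(x_1)-F(x_2)|\le Ld(x_1,x_2)$. The main (and only real) obstacle is the well-definedness in the first step; once the infimum is known to be finite, the agreement on $A$ and the Lipschitz bound are immediate consequences of the two inequalities above, so I expect no further difficulty.
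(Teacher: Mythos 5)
Your proof is correct and is exactly the standard argument: the paper itself does not prove this lemma but cites McShane's original paper and Heinonen's book, where the same three steps (finiteness of the infimum via a fixed basepoint $y_0\in A$, agreement on $A$, and the $L$-Lipschitz bound from the triangle inequality) appear. The only implicit assumption worth noting is that $A\neq\emptyset$ (needed to fix $y_0$ and to have $F$ real-valued), which the lemma statement tacitly makes as well.
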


Now we are ready to prove Theorem \ref{t1.1}.

\begin{proof}[Proof of Theorem~\ref{t1.1}]
Due to Lemma \ref{l1}, it is not restrictive to assume that $g$ is Borel measurable and
\begin{equation*}\label{HG2}
|u(x)-u(y)|\le d(x,y)[g(x)+g(y)]
\end{equation*}
holds true for all  $x,\,y\in X$.

For each $k\in\nn:=\{1,2,\ldots\}$, denote $\{x\in X:\ g(x)\le 2^k\}$ by $E_k$.
Let $E:=\cup_{k\in\nn}E_k$ and $F:=X\setminus E$. Since $g\in L^1_\loc(X)$,
we know that $\mu(F) = 0$.  We will  complete the proof of Theorem~\ref{t1.1} via two steps.

{\bf Step 1.} For each non-constant rectifiable curve
$\gz: [a, b] \to X$ with $\gz(a),\, \gz(b)\in E$, we now show that
\begin{equation}\label{UG2}
|u(\gz(a))-u(\gz(b))|\le 4\int_\gz g(s)\,ds.
\end{equation}
Since $\gz(a),\, \gz(b)\in E$, there exists $k\in \nn$ such that $\gz(a),\, \gz(b)\in E_k$. Notice that, 
for all $x,\,y\in E_k$,
\[
|u(x)-u(y)|\le d(x,y)[g(x)+g(y)]\le 2^{k+1}d(x,y),
\]
that is, $u$ is $2^{k+1}$-Lipschitz on $E_k$. By Lemma~\ref{l2},
we extend $u$ to a $2^{k+1}$-Lipschitz function $u_k$ on $X$, defined by
setting, for all $x\in X$,
\begin{equation}\label{LipExt}
   u_k(x):= \inf_{y\in E_k}\lf\{u(y)+2^{k+1}d(x,y)\r\},
\end{equation}
where  $u_k = u$ on $E_k$. Let
\[
  g_k(x):= g(x)\chi_{E_k}(x)+2^{k+1}\chi_{X\setminus E_k}(x),\quad x\in X.
\]
It follows from the definition of $u_k$ and $g_k$ that,  for all $x,\, y\in E_k$,
\begin{equation}\label{uk}
|u_k(x)-u_k(y)|\le d(x,y)[g_k(x)+g_k(y)].
\end{equation}
If at least one of $x$ and $y$ belongs to $X\setminus E_k$,
then $g_k(x)+g_k(y)\ge 2^{k+1}$, and hence
\begin{eqnarray*}
|u_k(x)-u_k(y)|\le 2^{k+1}d(x,y)\le d(x,y)[g_k(x)+g_k(y)],
\end{eqnarray*}
which implies that \eqref{uk} holds for all $x,\,y\in X$.

We now  follow the proof of~\cite[Theorem~8.6]{h03} to show that $g_k$ acts like an upper gradient of $u_k$ on $\gamma$.
Notice that
$\gamma(a),\,\gamma(b)\in E_k$, and hence $u(\gamma(a))=u_k(\gamma(a))$ and
$u(\gamma(b))=u_k(\gamma(b))$.
Let $\gz:\, [a,b]\to X$ be parameterized by its arc-length. By Luzin's theorem there is a set $D\subset [a,b]$
 of full measure such that for each $t\in D$ there exists a sequence $h_n\to 0$ such that
 $g_k\circ\gz(t+h_n)\to g_k\circ\gz(t)$.
 As the function $u_k\circ\gz$ is Lipschitz from $[a,b]$ into $X$, for almost every $t\in D$, we have
\begin{eqnarray*}
\lf|\frac{\,d}{\,dt}u_k\circ\gz(t)\r|&&=\lf|\lim_{n\to\infty}\frac{u_k\circ\gz(t+h_n)-u_k\circ\gz(t)}{h_n}\r|\\
&&\le \limsup_{n\to\infty}\lf|\frac{d(\gz(t+h_n),\gz(t))}{h_n}\r|\lf[g_k\circ\gz(t+h_n)+ g_k\circ\gz(t)\r]\\
&&\le 2g_k\circ \gz(t).
\end{eqnarray*}
The fundamental theorem of calculus ensures that
\begin{equation}\label{UG_k}
|u_k(\gz(a))-u_k(\gz(b))|\le \int_a^b \lf|\frac{\,d}{\,dt}u_k\circ\gz(t)\r|\,dt \le 2\int_a^b g_k\circ \gz(t)\,dt.
\end{equation}

On the other hand, observe that, if $y\in E_k$, then $g_k(y)=g(y)$ and, if
$y\in X\setminus E_k$, then $g_k(y)=2^{k+1}<2g(y)$. Hence we obtain
\[
  |u(\gz(a))-u(\gz(b))|\le 4\int_\gz g(s)\,ds,
\]
completing the proof of Step~1.

{\bf Step 2.} It remains to show that,
if $\gz(a)$ or $\gz(b)$ lies in $F=X\setminus E$, inequality~\eqref{UG2}
also holds true.
We let $\wz u(x):= u(x)$ when $x\in E$ and, otherwise,
\[
  \wz u(x):=\limsup_{k\to \fz} u_k(x),
\]
where $u_k$ is as in~\eqref{LipExt}. Obviously, $\wz u=u$ almost everywhere.

If $\int_\gz g(s)\,ds$ is infinite, then~\eqref{UG2} holds trivially.
We next consider that $\int_\gz g(s)\,ds<\fz$. Notice that $g=\infty$ on $X\setminus E$,
and hence in this case
$\mathcal{H}^1(|\gz|\cap F)=0$,
where $\mathcal{H}^1$ denotes the $1$-dimensional Hausdorff measure.
Therefore, we may assume  that $\gz(a)\in X\setminus E$ and $\gz(b)\in E$,
by cutting $\gz$ into two sub-curves if necessary.

 For each $k \in \nn$, by~\eqref{UG_k}, we conclude that
\[
|u_k(\gz(a))-u_k(\gz(b))|\le 2\int_{\gz} g_k(s)\,ds\le 4\int_{\gz} g(s)\,ds.
\]
Since $\gz(b)\in E$, it follows that there exists $k_0\in\nn$ such that $u_k(\gz(b)) = u(\gz(b))$ for
each $k\ge k_0$. This further implies that, for each $k\ge k_0$,
\begin{equation}\label{x}
|u_k(\gz(a))-u(\gz(b))|=|u_k(\gz(a))-u_k(\gz(b))|\le 4\int_{\gz} g(s)\,ds
\end{equation}
and hence
\[
|u_k(\gz(a))|\le |u(\gz(b))|+4\int_{\gz} g(s)\,ds<\fz.
\]
Therefore, by \eqref{x}, we obtain
\begin{eqnarray*}
|\wz u(\gz(a))-\wz u(\gz(b))|
=\left|\limsup_{k\to\fz} u_k(\gz(a))-u(\gz(b))\right|
&&\le 4\int_{\gz} g(s)\,ds,
\end{eqnarray*}
which is the desired inequality.
The proof of Theorem~\ref{t1.1} is then completed by combining the above two steps.
\end{proof}

\section{Local Haj\l asz gradients}
\hskip\parindent
In this section, we show that, if $g$ is only a local Haj\l asz gradient (see the following definition) of $u$,
then the conclusion of Theorem~\ref{t1.1} still holds true.

\begin{defn}[Local Haj\l asz gradient]\label{lhg}
 Let $u$ be a measurable function on $X$.
A non-negative measurable function $g$ on $X$ is called a
\textit{local Haj\l asz gradient} of $u$ if, for each $z\in X$,
there exists an open set $U_z \ni z$ and  $E_z\subset U_z$ with $\mu(E_z)=0$ such that,
for all $x,\ y\in U_z\setminus E_z$,
\begin{equation}
|u(x)-u(y)|\leq d(x,y)[g(x)+g(y)].
\end{equation}
\end{defn}

Obviously, the Haj\l asz gradients of a measurable function are local Haj\l asz gradients of that function.
However, in general, local Haj\l asz gradients need not be global Haj\l asz gradients of a function.
For example, in the Euclidean setting $X=\mathbb{R}^2$, the function
$g=\chi_{B(0,9/4)\setminus B(0,3/4)}$ is a local Haj\l asz gradient of the function
$f$ given by $f(x)=1$ when $x\in B(0,1)$, $f(x)=0$ when $x\in\mathbb{R}^2\setminus B(0,2)$,
and $f(x)=1-\text{dist}(x,B(0,1))$ otherwise. However, $g$ cannot be a global Haj\l asz gradient
of $f$.

From the following Corollary~\ref{c3}, we shall see that local Haj\l asz gradients
can serve as a connection between Haj\l asz gradients and upper gradients.

For measurable functions $u,\,g$, we shall call
$g$ is a \textit{strong Haj\l asz gradient} of $u$ in the set $U\subset X$
if, for all $x,\ y\in U$, it holds true that
$
|u(x)-u(y)|\leq d(x,y)[g(x)+g(y)].
$

\begin{thm}\label{t3.1}
Suppose that $X$ is a separable space.
Let $u$, $g\in L^1_\loc(X)$. Suppose that $g$ is a local Haj\l asz gradient of $u$.
Then there exist $\wz u,\, \wz g\in L^1_\loc(X)$ such that $u=\wz u$ and $g=\wz g$ almost everywhere and
$4\,\wz g$ is an upper gradient of $\wz u$.
\end{thm}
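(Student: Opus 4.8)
The plan is to reduce the local statement to Theorem~\ref{t1.1} applied on each member of a countable open cover, and then to glue the resulting local upper gradient inequalities along any prescribed curve. First, since $X$ is a separable metric space it is second countable, hence Lindel\"of, so the open cover $\{U_z\}_{z\in X}$ furnished by the definition of a local Haj\l asz gradient admits a countable subcover $\{U_i\}_{i\in\nn}$, with associated null sets $E_i\st U_i$. Arguing as in Lemma~\ref{l1}, I would replace $g$ by a Borel representative $\wh g$ with $\wh g=g$ almost everywhere, $\wh g\ev\fz$ on a Borel set of measure zero containing $\cup_i E_i$, and such that for every $i$ and all $x,\,y\in U_i$ one has $|u(x)-u(y)|\le d(x,y)[\wh g(x)+\wh g(y)]$. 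Thus $\wh g$ is an everywhere-valid Haj\l asz gradient of $u$ on each $U_i$. I set $F:=\{\wh g=\fz\}$, so that $\mu(F)=0$, $E:=X\bh F$, and I take $\wz g:=\wh g$.

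Next, for each $i$ I regard $(U_i,d,\mu)$ as a metric measure space in its own right; since $\wh g\in L^1_\loc(U_i)$ is a Haj\l asz gradient of $u$ there, Theorem~\ref{t1.1} (fed the already refined $\wh g$, so that its gradient output is $\wh g$ itself) produces a function $\wz u_i$ on $U_i$ with $\wz u_i=u$ on $E\cap U_i$ such that $4\wh g$ is an upper gradient of $\wz u_i$ for all curves lying in $U_i$. These local modifications all coincide with $u$ on $E$ and differ from it only on $F$. To obtain a single global function I define $\wz u:=u$ on $E$ and, for $x\in F$, $\wz u(x):=\wz u_{i(x)}(x)$, where $i(x)$ is the least index with $x\in U_{i(x)}$. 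Since $\mu(F)=0$, clearly $\wz u=u$ almost everywhere, and both $\wz u,\,\wz g$ lie in $L^1_\loc(X)$.

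It then remains to verify \eqref{UG} for $\wz u$ and $4\wh g$. Fix a non-constant rectifiable $\gz:\,[a,b]\to X$, parametrized by arc length; if $\int_\gz\wh g\,ds=\fz$ there is nothing to prove, so assume it is finite. Then $\int_a^b\wh g(\gz(t))\,dt<\fz$ while $\wh g\ev\fz$ on $F$, whence $S:=\{t\in[a,b]:\,\gz(t)\in F\}$ is Lebesgue null. The crucial step is to choose a partition $a=t_0<\cdots<t_m=b$ with three properties: (i) each subarc $\gz|_{[t_{j-1},t_j]}$ lies in some member of the cover, possible by the Lebesgue number lemma applied to the compact set $|\gz|$; (ii) every interior node satisfies $\gz(t_j)\in E$, possible since $S$ is null; and (iii) if $\gz(a)\in F$ then the first subarc lies in $U_{i(\gz(a))}$, and symmetrically for $\gz(b)$, possible since these are open neighborhoods of the respective endpoints. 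On each subarc I apply the inequality from Theorem~\ref{t1.1}: on an interior subarc both endpoints lie in $E$, where $\wz u=u=\wz u_i$; on a subarc abutting an $F$-endpoint $x$, property (iii) forces the subarc into the very set $U_{i(x)}$ used to define $\wz u(x)$, so the local value $\wz u_{i(x)}$ agrees with the global $\wz u$ at both endpoints. Summing $|\wz u(\gz(t_{j-1}))-\wz u(\gz(t_j))|\le 4\int_{t_{j-1}}^{t_j}\wh g(\gz(s))\,ds$ over $j$ and invoking the triangle inequality yields $|\wz u(\gz(a))-\wz u(\gz(b))|\le 4\int_\gz\wh g\,ds$.

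The main obstacle is precisely the consistency of $\wz u$ on the null set $F$: a priori the local extensions $\wz u_i$ and $\wz u_j$ need not agree at a common point of $F\cap U_i\cap U_j$, so one cannot glue them naively. My device for circumventing this is to fix, once and for all, the value of $\wz u$ at each $F$-point through one designated cover element $U_{i(x)}$, and then to arrange the partition of each curve so that any subarc touching an $F$-endpoint is driven into that same designated set; this is legitimate because interior crossings of $F$ occur only for a null set of parameters and can be avoided. Once this bookkeeping is in place, every individual subarc estimate is a direct citation of Theorem~\ref{t1.1}, and no fresh analysis along curves is needed.
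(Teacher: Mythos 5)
Your proposal is correct, and it shares the paper's overall skeleton---separability gives a countable subcover $\{U_i\}_i$, a Lemma~\ref{l1}-type refinement makes $g$ a Borel strong Haj\l asz gradient on each $U_i$, and the proof of Theorem~\ref{t1.1} is run on each $U_i$ to produce local corrections $\wz u_i$ that agree with $u$ off $F=\{\wh g=\fz\}$---but it resolves the central gluing obstacle by a genuinely different device. The paper confronts the possible inconsistency of the local corrections head-on: it defines the disagreement sets $Z_{j,m}:=\{x\in U_j\cap U_m:\ \wz u_j(x)\ne\wz u_m(x)\}$, shows $\mu(Z_{j,m})=0$, and proves by contradiction (with $\ez:=\frac1{16}|\wz u_m(x)-\wz u_j(x)|$, using absolute continuity of the integral to find a nearby point $\gz(d)\in E$ where $\wz u_j=\wz u_m=u$ with $\int_{\gz\vert_{[a,d]}}g\,ds<\ez$) that every rectifiable curve meeting $Z_{j,m}$ carries infinite $g$-integral; it then sets $\wz u:=0$ on $\bigcup_{j,m}Z_{j,m}$, so that the upper gradient inequality is vacuous for curves touching the bad set. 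You instead never determine whether the $\wz u_i$ agree on $F$: you fix a canonical designated index $i(x)$ for each $x\in F$ and engineer the curve partition (Lebesgue number lemma for the mesh, interior nodes in $E$ since $\{t:\gz(t)\in F\}$ is Lebesgue-null when $\int_\gz\wh g\,ds<\fz$, and endpoint subarcs routed into $U_{i(\gz(a))}$ and $U_{i(\gz(b))}$ by continuity) so that each subarc estimate is a direct citation of Theorem~\ref{t1.1} with matching endpoint values. The trade-off: the paper's route yields the stronger structural fact that the disagreement set is invisible to all curves of finite $g$-integral (so $\wz u$ may be defined arbitrarily there), but it leaves the final verification for curves avoiding the bad set implicit; your route is more elementary---no contradiction argument or disagreement sets at all---and makes the curve decomposition fully explicit, at the modest cost of the endpoint-routing bookkeeping, which you correctly observe is always feasible (in particular, when both endpoints lie in $F$ an interior node in $E$ exists because the bad parameter set is null). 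Your parenthetical remark that feeding the already refined $\wh g$ into the proof of Theorem~\ref{t1.1} returns $\wh g$ itself as the gradient, and that $\wz u_i=u$ exactly on $E\cap U_i$ (not merely a.e.), is exactly the point needed and matches how the paper itself invokes ``the proof of Theorem~\ref{t1.1}'' rather than its statement.
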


\begin{proof}
As before, we can assume that $g$ is Borel measurable.

For each $x\in X$, we fix an open set $U_x\ni x$
such that $g$ is a Haj\l asz gradient of $u$ in $U_x$.
The collection $\{U_x\, :\, x\in X\}$ covers $X$;
hence by the separability of $X$ we find a countable subcover $\{U_j\}_j$ of $X$.
Furthermore, by modifying $g$ as in Lemma \ref{l1} on
a Borel set of measure zero if necessary, we can also assume that, for each $j$ and
for all $x,\, y\in U_j$,
\[
|u(x)-u(y)|\leq d(x,y)[g(x)+g(y)].
\]

Applying the proof of
Theorem~\ref{t1.1} to each $U_j$, we obtain a corrected (relative to $U_j$) function
$\widetilde{u}_j$ such that $\widetilde{u}_j=u$ almost everywhere in $U_j$ and $4g$ is an upper gradient
of $\widetilde{u}_j$ in $U_j$. Then clearly $\widetilde{u}_j=\widetilde{u}_m$ almost everywhere on the
set $U_j\cap U_m$; thus it only remains to show that, for $j\ne m$, the set
\[
Z_{j,m}:=\{x\in U_j\cap U_m:\ \widetilde{u}_j(x)\ne \widetilde{u}_m(x)\}
\]
has the property that $\mu(Z_{j,m})=0$ and,
whenever $\gamma$ is a rectifiable curve in $X$ intersecting $Z_{j,m}$,
then $\int_\gamma g(s)\, ds=\infty$. We can then set
$\wz u(x):=\widetilde{u}_j(x)$ for $x\in U_j\setminus \bigcup_m Z_{j,m}$, and
$\wz u(x):=0$ if $x\in Z_{j,m}$ for some $j\ne m$.

So let us assume that $\gamma$ is a non-constant compact rectifiable curve that intersects
$Z_{j,m}$. For each $x\in Z_{j,m}$, we know that either $\widetilde{u}_j(x)\ne u(x)$ or else
$\widetilde{u}_m(x)\ne u(x)$. Therefore, by the proof of Theorem~\ref{t1.1}, we know that
$g(x)=\infty$ for each $x\in Z_{j,m}$, and hence  $\mu(Z_{j,m})=0$.

Let $F:=\bigcup_{j,m} Z_{j,m}$. Then $g=\infty$ on $F$.
If $\int_\gamma g(s)\, ds<\infty$, we  must then have
$\mathcal{H}^1(|\gz|\cap F)=0,$
which implies that $|\gz|\cap E$ is a dense subset of $|\gz|$ in the $\mathcal{H}^1$ sense,
where $E:=X\setminus F$.
By breaking $\gamma$ into finitely many sub-curves if needed, it suffices to consider
only the case that $\gamma\subset U_j$. Fix $x\in |\gz|\cap Z_{j,m}$, and
take $\ez:=\frac 1{16}|\wz u_m(x)-\wz u_j(x)|$. Let $\gz:\,[a,c]\to U_j$ satisfying $\gz(a)=x$.
 By the absolute continuity of integrals and  $\mathcal{H}^1(|\gz|\cap F)=0$,
we conclude that there exists $d\in (a,c)$ such that $\gz(d)\in E$ and
\[
  \int_{\gz\vert_{[a,d]}}g(s)\, ds<\ez.
\]
Then, since
$\widetilde{u}_j(\gz(d))=\widetilde{u}_m(\gz(d))$ and $\gz(a)=x$, we know that
\begin{align*}
|\wz u_m(x)-\wz u_j(x)|&\le |\widetilde{u}_j(\gz(a))-\widetilde{u}_j(\gz(d))|+|\widetilde{u}_m(\gz(d))-\widetilde{u}_m(\gz(a))|\\
  &\le 8\int_{\gz\vert_{[a,d]}}g(s)\, ds <8\ez,
\end{align*}
which is impossible. Thus we
must conclude that $\int_\gz g(s)\, ds=\infty$.
This finishes the proof of Theorem~\ref{t3.1}.
\end{proof}

\begin{cor}\label{c3}
Suppose $u\in L^1_{\loc}(X)$ and let $D(u)$ be the collection of all
local Haj\l asz gradients of $u$ and $p\in(1,\infty)$. Then the closure of
$D(u)\cap L^p(X)$  in $L^p(X)$ is contained in the class of all weak upper
gradients of $u$ (up to a factor of $4$).
\end{cor}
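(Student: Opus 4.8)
The plan is to combine Theorem~\ref{t3.1} with a Fuglede-type limiting argument carried out along $p$-almost every curve. Let $g$ belong to the closure of $D(u)\cap L^p(X)$ and choose $g_i\in D(u)\cap L^p(X)$ with $g_i\to g$ in $L^p(X)$; passing to a subsequence I may assume $\sum_i\|g_i-g\|_{L^p(X)}<\fz$ and, replacing $g$ by a nonnegative representative, $g\ge 0$. Applying Theorem~\ref{t3.1} to each $g_i$ produces $\wz u_i$ with $\wz u_i=u$ almost everywhere and such that $4g_i$ is a (genuine) upper gradient of $\wz u_i$. The candidate representative will be the \emph{fixed} function $\wz u:=\wz u_1$, and the goal is to show that $4g$ is a $p$-weak upper gradient of $\wz u$.

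First I would isolate the relevant exceptional curve family. Set $N:=\bigcup_i\{x\in X:\ \wz u_i(x)\ne u(x)\}$, so that $\mu(N)=0$. Outside a family $\Gamma_0$ of $p$-modulus zero, every non-constant rectifiable curve $\gz$ (parametrized by arc length) satisfies simultaneously: $\int_\gz g\,ds<\fz$; $\int_\gz g_i\,ds<\fz$ for every $i$; the Fuglede convergence $\int_{\gz'}g_i\,ds\to\int_{\gz'}g\,ds$ uniformly over all subcurves $\gz'$ of $\gz$ (here the summability of $\|g_i-g\|_{L^p(X)}$ is used); and $\mathcal{H}^1(|\gz|\cap N)=0$, the last because $\infty\cdot\chi_N$ is admissible for the family of curves with positive length in $N$ while $\|\infty\cdot\chi_N\|_{L^p(X)}=0$. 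All four assertions are standard consequences of $g,\,g_i\in L^p(X)$ and $\mu(N)=0$.

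Next comes the core estimate. Fix $\gz:\,[a,b]\to X$ with $\gz\notin\Gamma_0$. For parameters $s<t$ with $\gz(s),\,\gz(t)\notin N$ one has $\wz u_i(\gz(s))=u(\gz(s))$ and $\wz u_i(\gz(t))=u(\gz(t))$ for every $i$, so the upper gradient inequality for $\wz u_i$ gives
\[
|u(\gz(s))-u(\gz(t))|\le 4\int_{\gz|_{[s,t]}}g_i\,ds;
\]
letting $i\to\fz$ and using the Fuglede convergence yields
\[
|u(\gz(s))-u(\gz(t))|\le 4\int_{\gz|_{[s,t]}}g\,ds
\]
for all such good $s,\,t$. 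Since $4g_1$ is a genuine upper gradient of $\wz u=\wz u_1$ and $\int_\gz g_1\,ds<\fz$, the function $\wz u\circ\gz$ is absolutely continuous on $[a,b]$; because $\{t:\ \gz(t)\in N\}$ is null, I may choose good parameters $s_n\downarrow a$ and $t_n\uparrow b$ with $\wz u(\gz(s_n))=u(\gz(s_n))$ and $\wz u(\gz(t_n))=u(\gz(t_n))$. Passing to the limit in the previous display (the right-hand side tending to $4\int_\gz g\,ds$ by monotone convergence since $g\ge 0$, the left-hand side to $|\wz u(\gz(a))-\wz u(\gz(b))|$ by continuity of $\wz u\circ\gz$) gives
\[
|\wz u(\gz(a))-\wz u(\gz(b))|\le 4\int_\gz g\,ds.
\]
As this holds for every $\gz\notin\Gamma_0$, the function $4g$ is a $p$-weak upper gradient of $\wz u$, and $\wz u=u$ almost everywhere, which is the claim.

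The step I expect to be the main obstacle is the reconciliation of the different representatives $\wz u_i$ furnished by Theorem~\ref{t3.1}: one cannot simply assert that $4g$ is a weak upper gradient of an arbitrary representative of $u$, since altering a function on a null set can destroy the upper gradient inequality along curves whose endpoints fall in that null set (such curves need not form a $p$-exceptional family). The remedy encoded above is to evaluate all the $\wz u_i$ at common ``good'' points lying off the shared null set $N$---which $p$-almost every curve meets in length zero---and then to transfer the resulting inequality to the endpoints of $\gz$ through the absolute continuity of the single fixed representative $\wz u_1$ along $\gz$, while Fuglede's lemma legitimizes the passage $i\to\fz$.
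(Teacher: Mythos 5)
Your proof is correct, and it follows the route the paper intends: the paper states Corollary~\ref{c3} without proof as a direct consequence of Theorem~\ref{t3.1} combined with the standard Fuglede-lemma closedness argument for weak upper gradients, which is exactly what you carry out. Your extra care in reconciling the representatives $\wz u_i$ (fixing $\wz u=\wz u_1$, evaluating at good parameters off the common null set $N$, and using absolute continuity of $\wz u_1\circ\gz$ to reach the endpoints) fills in precisely the detail the paper leaves implicit, and it is sound.
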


A measure $\mu$ on $X$ is said to be \emph{locally doubling}
if, for each $R_0\in(0,\fz)$, there exists a positive constant
$C_d(R_0)$ such that, for each $r\in(0,R_0)$ and all $x\in X$,
\begin{equation*}
\mu(B(x,{2r}))\le C_d(R_0)\mu(B(x,r)).
\end{equation*}
We say that  $\mu$ is \emph{globally doubling} if the above inequality
holds with a  positive constant that is independent of $R_0.$

We also say that \emph{$X$ supports a local $p$-Poincar\'e inequality}
if, for each $R_0\in(0,\fz)$, there exists a positive constant
$C_P(R_0)$ such that, for each ball $B:=B(x,r)$ with $r\in (0,R_0)$ and  a function-upper gradient pair $(u,g)$ in $X$,
there exists $\lambda \in[1,\fz)$ so that
\begin{equation}\label{pi}
\frac{1}{\mu(B)}\, \int_B|u-u_B|\, d\mu
   \le C_P(R_0)\, r\, \left[\frac{1}{\mu(\lambda B)}\int_{\lambda B}g^p\, d\,\mu\right]^{1/p}.
\end{equation}
Here,
$
  u_B:=\frac{1}{\mu(B)}\, \int_B u\, d\mu
$
is the average of $u$ on the ball $B$, and $\lambda B$
the ball concentric with $B$ but with $\lambda$-times
the radius of $B$.
We then say that \emph{$(X,d,\mu)$ supports a $p$-Poincar\'e inequality},
if \eqref{pi} holds with a uniform positive constant $C_P$ for all $R_0\in(0,\fz)$.
For more information on Poincar\'e inequalities, we refer the reader to
\cite{hk98,hk00,ks08} and references therein.

Recall also that the \emph{restricted Hardy-Littlewood maximal function}
is defined for each
$f\in L^1_{\loc}(X)$ by
\[
  M_{R}f(x):=\sup_{0<t\le R} \frac{1}{\mu(B(x,t))}\, \int_{B(x,t)}|f(y)|\, d\mu(y),\quad x\in X.
\]

\begin{prop}\label{p3.1}
Let $X$ be complete, $\mu$ locally doubling,
and $X$ support a local $p$-Poincar\'e inequality for some $p\in(1,\infty)$.
Let $u\in L^1_{\loc}(X)$ and $D(u)$ be the collection of all local Haj\l asz gradients of $u$.
Then the closure of $D(u)\cap L^p(X)$ in $L^p(X)$ contains all the $p$-integrable
$p$-weak upper gradients of $u$ modulo a positive
constant multiple $C$.
\end{prop}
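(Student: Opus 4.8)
The plan is to show directly that, for every $p$-integrable $p$-weak upper gradient $g$ of $u$, the function $Cg$ lies in the $L^p(X)$-closure of $D(u)\cap L^p(X)$, where $C$ is the constant furnished by the Poincar\'e inequality and the doubling condition. The approximating local Haj\l asz gradients will be built from truncated Hardy--Littlewood maximal functions of $g$: for a suitable exponent $q\in(1,p)$ and for small radii $R>0$, I would set
\[
  h_R:=C\,\lf(M_R g^q\r)^{1/q},
\]
and then let $R\to0$. The two points to verify are that each $h_R$ is a local Haj\l asz gradient of $u$ lying in $L^p(X)$, and that $h_R\to Cg$ in $L^p(X)$ as $R\to0$.

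First I would upgrade the hypothesis. Since $X$ is complete, $\mu$ is locally doubling, and $X$ supports a local $p$-Poincar\'e inequality, the Keith--Zhong self-improvement theorem (see \cite{ks08}) provides an exponent $q\in(1,p)$ for which $X$ supports a local $q$-Poincar\'e inequality; this step, which genuinely uses completeness, is the crucial one, since it is precisely the strict inequality $q<p$ that will later keep the maximal function in $L^p$. With the $q$-Poincar\'e inequality in hand (applied to the pair $(u,g)$, noting that on balls of finite measure a $p$-weak upper gradient is also a $q$-weak upper gradient for $q\le p$), the standard telescoping argument of Haj\l asz--Koskela (see \cite{hk00}) yields, for almost every $x,\,y\in X$ with $d(x,y)$ small, the pointwise estimate
\[
  |u(x)-u(y)|\le C\,d(x,y)\lf[\lf(M_{2\lambda d(x,y)}g^q(x)\r)^{1/q}
  +\lf(M_{2\lambda d(x,y)}g^q(y)\r)^{1/q}\r].
\]
Localizing (for each $z\in X$ take $U_z:=B(z,R/(4\lambda))$, so that $2\lambda d(x,y)<R$ whenever $x,\,y\in U_z$ and hence $M_{2\lambda d(x,y)}g^q\le M_R g^q$), this shows that $h_R$ is a local Haj\l asz gradient of $u$ in the sense of Definition~\ref{lhg}, the admissible exceptional null set in each $U_z$ coming from the almost-everywhere nature of the estimate.

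It remains to pass to the limit. Because $q<p$, we have $p/q>1$, and the local boundedness of $M_{R_0}$ on $L^{p/q}(X)$ (a consequence of local doubling) together with $g^q\in L^{p/q}(X)$ gives $h_{R_0}\in L^p(X)$; since $h_R\le h_{R_0}$ for $R\le R_0$, every $h_R$ lies in $L^p(X)$, so $h_R\in D(u)\cap L^p(X)$. By the Lebesgue differentiation theorem, $M_R g^q\downarrow g^q$ almost everywhere as $R\downarrow0$, whence $h_R\to Cg$ pointwise almost everywhere; dominated convergence with dominating function $h_{R_0}$ then yields $h_R\to Cg$ in $L^p(X)$. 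Therefore $Cg$ belongs to the closure of $D(u)\cap L^p(X)$, as desired. The main obstacle is the self-improvement step: without the gain $q<p$ one would be forced to take $q=p$, and then $(M_R g^p)^{1/p}$ need not lie in $L^p(X)$ because the maximal operator is unbounded on $L^1$; secondarily, some care is needed in the telescoping derivation of the pointwise estimate and in confirming that its almost-everywhere validity genuinely produces a \emph{local} Haj\l asz gradient.
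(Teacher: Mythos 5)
Your proposal is correct and takes essentially the same route as the paper's proof: Keith--Zhong self-improvement to a local $q$-Poincar\'e inequality with $q<p$ (note the correct reference is \cite{kz}, not \cite{ks08}, which is Koskela--Saksman), the Haj\l asz--Koskela pointwise estimate via restricted maximal functions, localization on small balls to obtain local Haj\l asz gradients, $L^{p/q}$-boundedness of the maximal operator to stay in $L^p(X)$, and passage to the limit as the radius tends to zero. The only cosmetic differences are that the paper first reduces from $p$-weak upper gradients to genuine upper gradients via the Koskela--MacManus approximation \cite{km98} and concludes with monotone rather than dominated convergence, whereas you handle weak upper gradients directly through the ball-wise inclusion of moduli; both variants are sound.
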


\begin{proof}
From the local doubling property of $\mu$, we know that, for each fixed $r\in (0,\infty)$,
 $M_r$ is a bounded operator on $L^t(X)$ whenever $t>1$.
Furthermore, by the results of~\cite{kz}, we know that there exists some $q\in (1,p)$
such that $X$ supports a local $q$-Poincar\'e inequality. Indeed, the proof of~\cite{kz}
carries through in the setting of local notions as long as $X$ is proper, with the
balls considered in the ``good $\lambda$-inequalities" of~\cite{kz} required in our  local
setting to be of radius smaller than $R_0$. The local
doubling property of $X$ together with the completeness of $X$ guarantees that closed balls,
and hence  closed and bounded subsets of $X$, are compact.

Since $p$-weak
upper gradients of $u$ can be approximated in $L^p(X)$ by upper gradients (see~\cite{km98}),
it suffices to show that upper gradients in $L^p(X)$ can be approximated by local Haj\l asz gradients.

Let $\rho\in L^p(X)$
be an upper gradient of $u$. Since $\mu$ is locally  doubling and $X$ supports a local
$q$-Poincar\'e inequality, we know that
there exists a positive constant $C$ such that, whenever $x,\,y\in X$ with $d(x,y)<R_0/2$
are Lebesgue points of $u$,
\[
  |u(x)-u(y)|\le C\, d(x,y)\,\left\{
    \left[M_{2\lambda d(x,y)}(\rho^{q})(x)\right]^{1/q}+\left[M_{2\lambda d(x,y)}(\rho^q)(y)\right]^{1/q}\right\};
\]
see, for example, \cite{hk00}.
It follows that, for $r\in(0,R_0/2)$,
$Cg_r:=[M_r(\rho^q)]^{1/q}$ is a local Haj\l asz gradient of $u$, with
$U_z=B(z,r/4)$ for each $z\in X$. Because $q<p$, we have $C\, g_r\in D(u)\cap L^p(X)$ based on
the assumption and, by the Lebesgue differentiation theorem, we know that $g_r\to \rho$ almost everywhere in $X$ as
$r\to 0$. It follows from the monotone convergence theorem that $g_r\to \rho$ in $L^p(X)$, that is,
$C^{-1}\rho$ is in the $L^p$-closure of $D(u)\cap L^p(X)$. This concludes the proof of
Proposition~\ref{p3.1}.
\end{proof}

Invoking the local Haj\l asz gradient,
one can introduce the corresponding \emph{Haj\l asz-Sobolev space $m^{1,p}(X)$},  for $p\in(1,\fz)$,
by replacing Haj\l asz gradients in the definition of $M^{1,p}(X)$ from the introduction with
 local Haj\l asz gradients.

The next corollary follows from Theorem~\ref{t3.1}, Proposition~\ref{p3.1},
and~\cite[Theorem 4.9]{s00}.
In what follows, $\hookrightarrow$ denotes continuous embedding.

\begin{cor}\label{c3.6}
Let $p\in [1,\fz)$.  Then $M^{1,p}(X)\hookrightarrow m^{1,p}(X)\hookrightarrow N^{1,p}(X)$.
If in addition $X$ is complete, $\mu$ is doubling, and $X$ supports a $p$-Poincar\'e inequality,
then $m^{1,p}(X)$, $M^{1,p}(X)$ and $N^{1,p}(X)$ coincide with equivalent norms.
\end{cor}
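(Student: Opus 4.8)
The plan is first to prove the chain $M^{1,p}(X)\hookrightarrow m^{1,p}(X)\hookrightarrow N^{1,p}(X)$, and then, under the stronger hypotheses, to reverse it by producing $N^{1,p}(X)\hookrightarrow m^{1,p}(X)$ and invoking \cite[Theorem 4.9]{s00}. The first inclusion is immediate: every global Haj\l asz gradient of $u$ is a local Haj\l asz gradient, so the collection $D(u)$ of local gradients contains every Haj\l asz gradient, whence the infimum defining $\|u\|_{m^{1,p}(X)}$ is no larger than that defining $\|u\|_{M^{1,p}(X)}$ and the inclusion is norm non-increasing. For the second inclusion I would take $u\in m^{1,p}(X)$ with a local Haj\l asz gradient $g\in L^p(X)$ and apply Theorem~\ref{t3.1} to get $\wz u=u$ and $\wz g=g$ almost everywhere with $4\wz g$ an upper gradient, hence a $p$-weak upper gradient, of $\wz u$. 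As $\wz u\in L^p(X)$, this places $\wz u$ in $\wz N^{1,p}(X)$ and yields $\|\wz u\|_{\wz N^{1,p}(X)}\le\|u\|_{L^p(X)}+4\inf_g\|g\|_{L^p(X)}\le 4\|u\|_{m^{1,p}(X)}$, so $u\mapsto[\wz u]$ is a bounded map into $N^{1,p}(X)$.

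To see that $u\mapsto[\wz u]$ descends to a well-defined, injective, linear embedding, I would use the standard fact from \cite{s00} that any function in $\wz N^{1,p}(X)$ vanishing $\mu$-almost everywhere has zero $\wz N^{1,p}(X)$-quasinorm (it vanishes outside a set met by $p$-almost no curve). Hence if two representatives of the same almost-everywhere class of $u$ produce corrections $\wz u_1,\wz u_2$, then $\wz u_1-\wz u_2\in\wz N^{1,p}(X)$ vanishes almost everywhere, so $[\wz u_1]=[\wz u_2]$; the same fact gives injectivity, and linearity is inherited from the vector-space structure of almost-everywhere classes. This establishes the first assertion.

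For the second assertion I would add completeness of $X$, the doubling property of $\mu$ (which also makes $X$ separable, so that Theorem~\ref{t3.1} is applicable) and a $p$-Poincar\'e inequality. To reverse the second inclusion, take $u\in N^{1,p}(X)$ with a $p$-integrable $p$-weak upper gradient $\rho$. Proposition~\ref{p3.1} places $C^{-1}\rho$ in the $L^p(X)$-closure of $D(u)\cap L^p(X)$; in particular $D(u)\cap L^p(X)\ne\emptyset$, so $u\in m^{1,p}(X)$, and, since the $L^p(X)$-norm is continuous along an approximating sequence in $D(u)\cap L^p(X)$, we get $\inf_{g\in D(u)}\|g\|_{L^p(X)}\le C^{-1}\|\rho\|_{L^p(X)}$. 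Taking the infimum over all such $\rho$ gives $\|u\|_{m^{1,p}(X)}\le\max\{1,C^{-1}\}\|u\|_{N^{1,p}(X)}$, i.e.\ $N^{1,p}(X)\hookrightarrow m^{1,p}(X)$, so $m^{1,p}(X)=N^{1,p}(X)$ with equivalent norms. Finally \cite[Theorem 4.9]{s00} supplies $M^{1,p}(X)=N^{1,p}(X)$ with equivalent norms, and the chain $M^{1,p}(X)\hookrightarrow m^{1,p}(X)\hookrightarrow N^{1,p}(X)=M^{1,p}(X)$ collapses, forcing all three spaces to coincide with equivalent norms.

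The mathematical substance is carried by Theorem~\ref{t3.1}, Proposition~\ref{p3.1} and \cite[Theorem 4.9]{s00}, so what remains is careful bookkeeping rather than a single hard step. I expect two points to need genuine care. First, the passage to the quotient space $N^{1,p}(X)$: one must verify that almost-everywhere equality of Newton--Sobolev representatives each carrying an $L^p(X)$ upper gradient upgrades to $\sim$-equivalence, since otherwise the maps above live only on representatives and the asserted norm equivalences need not descend to honest classes. Second, the constants: the factor $4$ from Theorem~\ref{t3.1} and the constant $C$ from Proposition~\ref{p3.1} make the conclusion equivalence, not equality, of norms, and they must be propagated consistently through the chain. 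A smaller but real issue is the separability hypothesis hidden in Theorem~\ref{t3.1}: for the first assertion it should be read into the standing assumptions on $X$ (or one routes $M^{1,p}(X)\hookrightarrow N^{1,p}(X)$ through the separability-free Theorem~\ref{t1.1}), whereas for the second it is provided by the doubling property.
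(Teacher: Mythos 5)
Your argument is correct and follows exactly the paper's route: the paper justifies this corollary in one line by citing Theorem~\ref{t3.1}, Proposition~\ref{p3.1} and \cite[Theorem 4.9]{s00}, and your bookkeeping on quotient classes (via the standard fact from \cite{s00} that a $\wz N^{1,p}$-function vanishing almost everywhere has zero quasi-norm), on the constants $4$ and $C$, and on separability (indeed tacit in the first assertion and supplied by the doubling property in the second) fills in precisely what the paper leaves implicit. The only caveat, inherited from the paper's statement rather than introduced by you, is that Proposition~\ref{p3.1} and \cite[Theorem 4.9]{s00} are results for $p\in(1,\infty)$, so the coincidence assertion should be read with $p>1$ (on $\rn$ one has $M^{1,1}(\rn)\subsetneq N^{1,1}(\rn)$), exactly as your use of these two ingredients presupposes.
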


\begin{rem}
On an $n$-dimensional Riemannian manifold ${\mathscr M}$ with Ricci
curvature bounded from below by a negative constant, it is well known that
the measure and the Poincar\'e inequalities hold
only locally, we do not know whether $M^{1,p}({\mathscr M})$
and $N^{1,p}({\mathscr M})$ coincide or not, however,
the coincidence of $m^{1,p}({\mathscr M})$ and $N^{1,p}({\mathscr M})$
still holds true.
\end{rem}

\section{Some applications}
\hskip\parindent
In recent years,
there are many attempts to extend classical Sobolev type spaces to metric measure settings
based on some more general spaces besides Lebesgue spaces; see, for instance, \cite{Du,Tu,hhp,CJ,lyy}
for such developments. To best of our knowledge, the most general scale of Newtonian type spaces is due to Mal\'{y} \cite{maly1442,maly1448}, who
studied the Newtonian type spaces  associated with
a general quasi-Banach function lattice $\mathcal{B}$ on $X$.

Recall that a quasi-Banach space of real-valued measurable functions is a
vector space of real-valued functions equipped with a quasi-norm satisfying
Riesz-Fischer property (a notion analogous to completeness).
A \emph{quasi-Banach function lattice $\mathcal{B}$}
is a quasi-Banach space of real-valued measurable functions
on $X$ satisfying, if $f\in \mathcal{B}$
and $|g|\le |f|$ almost everywhere, then $g\in \mathcal{B}$ and $\|g\|_{\mathcal{B}}
\le \|f\|_{\mathcal{B}}$.

\begin{defn}[\cite{maly1442,maly1448}]
(i) Let $\wz{N}^{1}\mathcal{B}(X)$ be
the space of all measurable functions $u\in \mathcal{B}$ which have an
upper gradient $g\in \mathcal{B}$ and, for all $u\in \wz{N}^{1}\mathcal{B}(X)$, let
$$
\|u\|_{\wz{N}^{1}\mathcal{B}(X)}:= \|u\|_{\mathcal{B}}+\inf_g \|g\|_{\mathcal{B}},
$$
where the infimum is taken over all upper gradients $g$ of $f$.

The \textit{Newton-Sobolev space} ${N}^{1}\mathcal{B}(X)$ based on $\mathcal{B}$
is then defined as the quotient space ${N}^{1}\mathcal{B}(X):=\wz{N}^{1}\mathcal{B}(X)/\sim$,
equipped with
$\|\cdot\|_{{N}^{1}\mathcal{B}(X)}:=\|\cdot\|_{\wz{N}^{1}\mathcal{B}(X)}$,
where the relation $\sim$ is defined by $u\sim v$ if and only if
$\|u-v\|_{\wz{N}^{1}\mathcal{B}(X)}=0$.

(ii) The homogeneous version $\dot{N}^{1}\mathcal{B}(X)$ is defined via
replacing the condition  $u\in \mathcal{B}$ in the definition
of ${N}^{1}\mathcal{B}(X)$ by $u\in L^1_\loc(X)$ and the quasi-norm by
$$\|u\|_{\dot{N}^{1}\mathcal{B}(X)}:= \inf_g \|g\|_{\mathcal{B}},$$
where the infimum is taken over all upper gradients $g$ of $u$.
\end{defn}

Notice that ${N}^{1}L^p(X)=N^{1,p}(X)$. The
Haj\l asz-Sobolev spaces based on $\mathcal{B}$ can be defined as follows.

\begin{defn}\label{hbs}
(i)
The \textit{Haj\l asz-Sobolev space} ${M}^{1}\mathcal{B}(X)$ based on $\mathcal{B}$ is defined to be
the space of all measurable functions $u\in \mathcal{B}$ which have
a Haj\l asz gradient $g\in \mathcal{B}$ and
$$\|u\|_{M^{1}\mathcal{B}(X)}
:= \|u\|_{\mathcal{B}}+\inf_g \|g\|_{\mathcal{B}},$$
where the infimum is taken over all Haj\l asz gradients $g$ of $u$.

(ii) The homogeneous version $\dot{M}^{1}\mathcal{B}(X)$ is defined
via replacing the condition  $u\in \mathcal{B}$ in the definition of ${M}^{1}\mathcal{B}(X)$
by $u\in L^1_\loc(X)$ and the quasi-norm by
$$\|u\|_{\dot{M}^{1}\mathcal{B}(X)}:= \inf_g \|g\|_{\mathcal{B}},$$
where the infimum is taken over all Haj\l asz gradients $g$ of $u$.
\end{defn}

As an immediate consequence of Theorem~\ref{t1.1},
we obtain the following relations between
Haj\l asz-Sobolev spaces  and Newton-Sobolev spaces based on $\mathcal{B}$, the details being omitted.

\begin{thm}\label{t1}
Let $\mathcal{B}$ be a quasi-Banach function lattice on $X$ such
that $\mathcal{B}\subset L^1_\loc(X)$. Then ${M}^{1}\mathcal{B}(X)\hookrightarrow{N}^{1}\mathcal{B}(X)$ and $ \dot{M}^{1}\mathcal{B}(X)
\hookrightarrow\dot{N}^{1}\mathcal{B}(X).$
\end{thm}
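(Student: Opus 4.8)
The plan is to reduce both embeddings to Theorem~\ref{t1.1} and then transport its conclusion through the lattice structure of $\mathcal{B}$. First I would take $u\in M^1\mathcal{B}(X)$ together with a Haj\l asz gradient $g\in\mathcal{B}$ of $u$. Since $\mathcal{B}\subset L^1_\loc(X)$ by hypothesis, the pair $(u,g)$ meets the standing assumptions of Theorem~\ref{t1.1}, so there exist $\wz u,\,\wz g\in L^1_\loc(X)$ with $\wz u=u$ and $\wz g=g$ almost everywhere such that $4\wz g$ is an upper gradient of $\wz u$. Thus $\wz u$ is merely a modification of $u$ on a null set that additionally carries an honest upper gradient.

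The next step uses the defining lattice property of $\mathcal{B}$. Since $|\wz u|=|u|$ and $|\wz g|=|g|$ almost everywhere and $u,\,g\in\mathcal{B}$, applying the lattice inequality in both directions gives $\wz u,\,\wz g\in\mathcal{B}$ with $\|\wz u\|_{\mathcal{B}}=\|u\|_{\mathcal{B}}$ and $\|\wz g\|_{\mathcal{B}}=\|g\|_{\mathcal{B}}$. Hence $4\wz g\in\mathcal{B}$ is an upper gradient of $\wz u$ with $\|4\wz g\|_{\mathcal{B}}=4\|g\|_{\mathcal{B}}$, so $\wz u\in\wz{N}^1\mathcal{B}(X)$ and
\[
\|\wz u\|_{\wz{N}^1\mathcal{B}(X)}=\|\wz u\|_{\mathcal{B}}+\inf\nolimits_{g'}\|g'\|_{\mathcal{B}}\le\|u\|_{\mathcal{B}}+4\|g\|_{\mathcal{B}},
\]
where the infimum runs over all upper gradients $g'$ of $\wz u$. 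Taking the infimum over all Haj\l asz gradients $g$ of $u$ yields $\|\wz u\|_{\wz{N}^1\mathcal{B}(X)}\le\|u\|_{\mathcal{B}}+4\inf_g\|g\|_{\mathcal{B}}\le 4\|u\|_{M^1\mathcal{B}(X)}$. Assigning to the class of $u$ the class of $\wz u$ in $N^1\mathcal{B}(X)=\wz{N}^1\mathcal{B}(X)/\!\sim$ then produces the embedding with constant at most $4$. The homogeneous statement is obtained verbatim, replacing $u\in\mathcal{B}$ by $u\in L^1_\loc(X)$ and dropping the term $\|u\|_{\mathcal{B}}$, so that $\|\wz u\|_{\dot{N}^1\mathcal{B}(X)}\le 4\|u\|_{\dot{M}^1\mathcal{B}(X)}$.

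The point requiring the most care is not the norm estimate but the well-definedness of this assignment on the quotient, i.e.\ that it descends to a genuine map into $N^1\mathcal{B}(X)$. If $u=u'$ almost everywhere, the good representatives $\wz u,\,\wz u'$ produced above again agree almost everywhere, and one must verify $\wz u\sim\wz u'$, namely $\|\wz u-\wz u'\|_{\wz{N}^1\mathcal{B}(X)}=0$. This is precisely the standard identification property of Newtonian-type spaces: functions carrying upper gradients in $\mathcal{B}$ that coincide almost everywhere in fact coincide outside a negligible curve family, so that their difference has vanishing $\wz{N}^1\mathcal{B}(X)$-quasi-norm. I would invoke this feature of the underlying framework of Mal\'y~\cite{maly1442,maly1448} rather than re-prove it, and it is here that the genuine subtlety lies, since for honest upper gradients alone almost-everywhere vanishing of a function does not by itself force the infimum over its upper gradients to vanish.
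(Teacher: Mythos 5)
Your proposal is correct and takes essentially the same approach as the paper, which derives Theorem~\ref{t1} as an immediate consequence of Theorem~\ref{t1.1} (explicitly omitting the details): replace $u,g$ by the good representatives $\wz u,\wz g$ from Theorem~\ref{t1.1}, use the lattice property of $\mathcal{B}$ to keep the quasi-norms, and infimize over Haj\l asz gradients. Your closing paragraph on well-definedness of the induced map into the quotient $N^{1}\mathcal{B}(X)$ addresses a point the paper leaves implicit, and your resolution is the right one: in the framework of Mal\'y~\cite{maly1442,maly1448}, functions in $\wz{N}^{1}\mathcal{B}(X)$ that agree almost everywhere agree outside a curve family of zero $\mathcal{B}$-modulus, so their difference has vanishing $\wz{N}^{1}\mathcal{B}(X)$-quasi-norm.
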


In what follows, for all  $r\in(0,\fz)$, $u\in L^r_{\loc}(X)$ and $x\in X$, define
the \emph{non-centered un-restricted Hardy-Littlewood maximal operator $\mathcal{M}^{(r)}$ with
exponent $r$} by
\[
\mathcal{M}^{(r)}(u)(x):=\sup_{B\ni x}\lf\{\frac1{\mu(B)}\int_B |u(y)|^r\,d\mu(y)\r\}^{1/r},
\]
where the supremum is taken over all balls $B$ of $X$ containing $x$.

\begin{thm}\label{t2}
Let $p\in(1,\fz)$.
Assume that $X$ is complete, $\mathcal{B}$ is a quasi-Banach
function lattice on $X$ such that
$\mathcal{B}\subset L^1_\loc(X)$,
$X$ supports a $p$-Poincar\'e inequality for some $p\in(1,\fz)$, and that $\mu$
is doubling. If there exists $r_0\in(1,p)$
such that for all $r\in (r_0,p)$, the Hardy-Littlewood maximal function $\mathcal{M}^{(r)}$
is bounded on $\mathcal{B}$,
then
$${N}^{1}\mathcal{B}(X)={M}^{1}\mathcal{B}(X)\quad \mathrm{and}\quad
\dot{N}^{1}\mathcal{B}(X)
=\dot{M}^{1}\mathcal{B}(X)$$
with equivalent quasi-norms.
\end{thm}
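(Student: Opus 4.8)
The plan is to prove the two asserted identities by establishing, in each case, the embedding opposite to the one already supplied by Theorem~\ref{t1}. Indeed, Theorem~\ref{t1} yields ${M}^{1}\mathcal{B}(X)\hookrightarrow{N}^{1}\mathcal{B}(X)$ and $\dot{M}^{1}\mathcal{B}(X)\hookrightarrow\dot{N}^{1}\mathcal{B}(X)$, so it remains only to show that every Newton--Sobolev function admits a Haj\l asz gradient controlled in $\mathcal{B}$. Thus, starting from $u\in{N}^{1}\mathcal{B}(X)$ (respectively $u\in\dot{N}^{1}\mathcal{B}(X)$) with an upper gradient $\rho\in\mathcal{B}$, I would manufacture a Haj\l asz gradient of the form $C\,\mathcal{M}^{(q)}(\rho)$ for a suitable exponent $q$, and then invoke the boundedness hypothesis to place this gradient in $\mathcal{B}$ with quasi-norm $\lesssim\|\rho\|_{\mathcal{B}}$.

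The first substantive step is to lower the Poincar\'e exponent below $p$. Since $X$ is complete, $\mu$ is doubling, and $X$ supports a $p$-Poincar\'e inequality, the self-improvement result of Keith and Zhong~\cite{kz} guarantees an $\ez\in(0,\fz)$ such that $X$ supports a $(p-\ez)$-Poincar\'e inequality; together with the elementary fact that a $p$-Poincar\'e inequality implies an $s$-Poincar\'e inequality for every $s\ge p$ (via H\"older's inequality), this shows that $X$ supports an $s$-Poincar\'e inequality for all $s\in(p-\ez,\fz)$. Because $r_0<p$, the interval $(\max\{r_0,\,p-\ez\},\,p)$ is nonempty, and I would fix once and for all an exponent $q$ lying in it, so that simultaneously $X$ supports a $q$-Poincar\'e inequality and $q\in(r_0,p)$.

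With this $q$ in hand, the standard Haj\l asz--Koskela pointwise estimate (see, for instance,~\cite{hk00}) applies: since $\mu$ is doubling and $X$ supports a $q$-Poincar\'e inequality, there is a positive constant $C$ such that, for every pair of Lebesgue points $x,\,y\in X$ of $u$,
\[
|u(x)-u(y)|\le C\,d(x,y)\lf[\mathcal{M}^{(q)}(\rho)(x)+\mathcal{M}^{(q)}(\rho)(y)\r],
\]
where we have dominated the restricted maximal functions appearing in the estimate by the un-restricted operator $\mathcal{M}^{(q)}$. Hence $C\,\mathcal{M}^{(q)}(\rho)$ is, after a modification on the null set of non-Lebesgue points, a Haj\l asz gradient of $u$. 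Since $q\in(r_0,p)$, the hypothesis ensures that $\mathcal{M}^{(q)}$ is bounded on $\mathcal{B}$; as $\rho\in\mathcal{B}$, we obtain $\mathcal{M}^{(q)}(\rho)\in\mathcal{B}$ with $\|\mathcal{M}^{(q)}(\rho)\|_{\mathcal{B}}\lesssim\|\rho\|_{\mathcal{B}}$. Taking the infimum over upper gradients $\rho$ of $u$ then yields $u\in{M}^{1}\mathcal{B}(X)$ (respectively $\dot{M}^{1}\mathcal{B}(X)$) with $\|u\|_{M^{1}\mathcal{B}(X)}\lesssim\|u\|_{N^{1}\mathcal{B}(X)}$, which is the reverse embedding; combining with Theorem~\ref{t1} gives the claimed equalities with equivalent quasi-norms.

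The crux of the argument---and the reason the hypotheses are calibrated as they are---is the interplay at the single exponent $q$. The pointwise inequality produced by the Poincar\'e inequality naturally features the maximal operator at the Poincar\'e exponent, which a priori is only $p$; but the hypothesis supplies boundedness of $\mathcal{M}^{(r)}$ on $\mathcal{B}$ solely for $r$ strictly below $p$. The self-improvement of~\cite{kz} is precisely what bridges this gap, allowing the Poincar\'e exponent to be pushed into the open range $(r_0,p)$ where the maximal operator is controlled on $\mathcal{B}$. I expect the main obstacle to lie in verifying that the Keith--Zhong self-improvement, in its usual global form, indeed delivers a $q$-Poincar\'e inequality for some $q\in(r_0,p)$ under exactly the present hypotheses, and in confirming that the pointwise estimate may be invoked with the un-restricted operator $\mathcal{M}^{(q)}$ matching the one in the boundedness assumption.
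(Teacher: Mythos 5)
Your proposal is correct and follows essentially the same route as the paper's proof: both reduce matters to the reverse embedding via Theorem~\ref{t1}, use the Keith--Zhong self-improvement to obtain a Poincar\'e inequality at some exponent $q\in(r_0,p)$, invoke the Haj\l asz--Koskela pointwise estimate from \cite{hk00} to see that $C\,\mathcal{M}^{(q)}(\rho)$ is a Haj\l asz gradient of $u$, and conclude from the assumed boundedness of $\mathcal{M}^{(q)}$ on $\mathcal{B}$. The only cosmetic differences are that you make explicit the choice of $q$ in the nonempty interval $(\max\{r_0,\,p-\ez\},\,p)$ and the domination of the restricted maximal function by the un-restricted one, both of which the paper leaves implicit (and note that no modification on the non-Lebesgue-point set is even needed, since the definition of a Haj\l asz gradient already permits an exceptional null set).
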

\begin{proof}
By Theorem \ref{t1}, it suffices to prove the embeddings ${N}^{1}\mathcal{B}(X)\hookrightarrow{M}^{1}\mathcal{B}(X)$ and
$\dot{N}^{1}\mathcal{B}(X)
\hookrightarrow\dot{M}^{1}\mathcal{B}(X)$.
We only prove the first embedding due to similarity.

Let $u\in {N}^{1}\mathcal{B}(X)$. Then there exists an upper gradient
$g$ of $u$ such that
$\|u\|_{\mathcal{B}}+\|g\|_{\mathcal{B}}\le 2\|u\|_{{N}^{1}\mathcal{B}(X)}.$
Since $X$ supports a $p$-Poincar\'e inequality, by \cite[Theorem 1.0.1]{kz}, we know that
$X$ also supports a $r$-Poincar\'{e} inequality for some $r<p$ (which can be chosen large enough so that
$\mathcal{M}^{(r)}$ is also a bounded operator on $\mathcal{B}$) and hence,
by \cite[Theorem 3.2]{hk00},
there exist a positive constant $C$ and a set $E\subset X$ with
$\mu(E)=0$ such that, for all $x,\,y\in X\setminus E$,
\begin{equation*}
|u(x)-u(y)|\le Cd(x,y)\lf[\mathcal{M}^{(r)}(g)(x)+\mathcal{M}^{(r)}(g)(y)\r].
\end{equation*}
Therefore,  
$C\, \mathcal{M}^{(r)}(g)$ is a Haj\l asz gradient of $f$.
From the boundedness of $\mathcal{M}^{(r)}$ on $\mathcal{B}$, it follows that there exists a positive
constant $C$ such that, for all $u\in {N}^{1}\mathcal{B}(X)$,
$$\|u\|_{{M}^{1}\mathcal{B}(X)}\le C\lf\{\|u\|_{\mathcal{B}}+\|\mathcal{M}^{(r)}(g)\|_{\cb}\r\}
\le C\lf\{\|u\|_{\cb}+\|g\|_{\cb}\r\}\le C\|u\|_{{N}^{1}\mathcal{B}(X)},$$
and hence ${N}^{1}\mathcal{B}(X)\hookrightarrow{M}^{1}\mathcal{B}(X)$.
This finishes the proof of Theorem \ref{t2}.
\end{proof}

Finally, let us make some comments. Examples of $\mathcal{B}$ satisfying assumptions of
Theorems~\ref{t1} and~\ref{t2} include Lebesgue spaces (\cite{h96,s00,Du}), Lorentz spaces (\cite{CJ}),
Orlicz spaces (\cite{Tu}),  Orlicz-Musielak variable exponent space (\cite{hhp}),  and Morrey spaces ({\cite{lyy}}).

It is known that continuous functions are dense in
$M^{1,p}(X)$ and $N^{1,p}(X)$ with $p\in(1,\fz)$ if $X$ is a doubling space
supporting a $p$-Poincar\'e inequality (see \cite{s00}).
However, this density property might not be true for
the  spaces ${M}^{1}\mathcal{B}(X)$, ${\dot M}^{1}\mathcal{B}(X)$,
${N}^{1}\mathcal{B}(X)$ and ${\dot N}^{1}\mathcal{B}(X)$
when  $\mathcal{B}$ is a general quasi-Banach function lattice;
 see,
for example, \cite{lyy} for the case that $\mathcal{B}$ is a Morrey space.

The Newton-Morrey-Sobolev space $N^1M^q_p(X)$ and the
Haj\l asz-Morrey-Sobolev space $M^1M^q_p(X)$ with $1<p\le q<\fz$,
introduced and studied in \cite{lyy},  are our original motivation to compare Haj{\l}asz gradients and
upper gradients. Recall that the \emph{Morrey space} $M_p^q(X)$ with
$1<p\le q<\fz$  is defined as
the space of all measurable functions $f$ on $X$ such that
\begin{equation*}
\left\|f\right\|_{M_p^q(X)}:=\sup_{B\subset X}
[\mu(B)]^{1/q-1/p}\lf[\int_B|f(x)|^p\,d\mu(x)\r]^{1/p}<\fz,
\end{equation*}
where the supremum is taken over all balls $B$ in $X$.
It is shown by \cite[Remark 4.9]{lyy} that Lipschitz continuous functions are not dense in these spaces. Therefore,
to show the coincidence between $N^1M^q_p(X)$ and $M^1M^q_p(X)$,
we can not apply the  approach used in~\cite{s00}
for the coincidence between the
Newton-Sobolev and the Haj\l asz-Sobolev spaces.
However, Theorems~\ref{t1} and~\ref{t2} are applicable to these spaces,
since the maximal operator $\mathcal{M}^{(r)}$ is bounded on $M^q_p(X)$ when $r<p$
(see, for example, \cite{am}).

\medskip

{\small\noindent{\bf Acknowledgements}. We thank the referee for  valuable
suggestions that helped improve the exposition of this article.}

\bigskip

\noindent Renjin Jiang, Dachun Yang (Corresponding author) and Wen Yuan

\medskip

\noindent  School of Mathematical Sciences, Beijing Normal University,
Laboratory of Mathematics and Complex Systems, Ministry of
Education, Beijing 100875, People's Republic of China

\smallskip

\noindent {\it E-mails}: \texttt{rejiang@bnu.edu.cn} (R. Jiang)

\hspace{0.95cm}\texttt{dcyang@bnu.edu.cn} (D. Yang)

\hspace{0.85cm} \texttt{wenyuan@bnu.edu.cn} (W. Yuan)

\bigskip

\noindent Nageswari Shanmugalingam

\medskip

\noindent Department of Mathematical Sciences,
P.O.Box 210025, University of Cincinnati,
Cincinnati, OH 45221--0025, U.S.A.

\smallskip

\noindent{\it E-mail:} \texttt{shanmun@uc.edu}


\begin{thebibliography}{10}
\bibitem{am}
H. Arai and T. Mizuhara,
Morrey spaces on spaces of homogeneous type and estimates for $\square_b$
and the Cauchy-Szeg\"o projection, Math. Nachr. 185 (1997), 5-20.

%\vspace{-.23cm}
%\bibitem{adhm} P. Auscher, T. Coulhon, X. T. Duong and S. Hofmann,
%Riesz transform on manifolds and heat kernel regularity, Ann. Sci. \'Ecole Norm. Sup. (4) 37 (2004), 911-957.

\vspace{-.23cm}
\bibitem{bb} A.  Bj\"orn and J. Bj\"orn, Nonlinear Potential Theory on Metric Spaces,
EMS Tracts in Mathematics, 17, European Mathematical Society (EMS), Z\"urich, 2011, xii+403 pp.

%\vspace{-.23cm}
%\bibitem{chk}
%H. M. Chung, R. A. Hunt and D. S. Kurtz,
%The Hardy-Littlewood maximal function on $L(p,q)$ spaces with weights,
%Indiana Univ. Math. J. 31 (1982), 109-120.


\vspace{-.23cm}
\bibitem{CJ} \c{S}. Costea and M. Miranda Jr, Newtonian Lorentz metric spaces,
Illinois J. Math. 56 (2012), 579-616.

\vspace{-.23cm}
\bibitem{Du}
E. Durand-Cartegena, Some Topics in Lipschitz Analysis on Metric Spaces, Ph.D.
Thesis, Complutense University of Madrid, 2011.

\vspace{-0.23cm}
\bibitem{h96}
P. Haj\l asz,
Sobolev spaces on an arbitrary metric space, Potential Anal. 5 (1996), 403-415.


\vspace{-0.23cm}
\bibitem{h03}
P. Haj\l asz, Sobolev spaces on metric-measure spaces, in: Heat kernels and analysis
on manifolds, graphs, and metric spaces (Paris, 2002), 173-218,
Contemp. Math., 338, Amer. Math. Soc., Providence, RI, 2003.

\vspace{-0.23cm}

\bibitem{hk00} P. Haj\l asz and P. Koskela,
Sobolev Met Poincar\'e,
Mem. Amer. Math. Soc. 145 (2000), no. 688, x+101 pp.


%\vspace{-0.3cm}
%
%\bibitem{hhp1}
%P. Harjulehto, P. H\"ast\"o and M. Pere,
%Variable exponent Sobolev spaces on metric measure spaces:
%the Hardy-Littlewood maximal operator,
%Real Anal. Exchange 30 (2004), 87-104.

\vspace{-0.23cm}
\bibitem{hhp}
P. Harjulehto, P. H\"ast\"o and M. Pere,
Variable exponent Sobolev spaces on metric measure spaces, Funct.
Approx. Comment. Math. 36 (2006), 79-94.

\vspace{-0.23cm}
\bibitem{he} J. Heinonen,
Lectures on Analysis on Metric Spaces, Springer-Verlag, New York, 2001.


\vspace{-0.23cm}
\bibitem{hk98} J. Heinonen and P. Koskela,
Quasiconformal maps in metric spaces with controlled
geometry, Acta Math. 181 (1998), 1-61.



%\vspace{-0.23cm}
%\bibitem{hek2} J. Heinonen and P. Koskela,
%A note on Lipschitz functions, upper gradients, and the Poincar\'e inequality,
%New Zealand J. Math. 28 (1999), 37-42.


\vspace{-0.23cm}
\bibitem{hkst} J. Heinonen, P. Koskela, N. Shanmugalingam and J. Tyson,
Sobolev Spaces On Metric Measure Spaces: An Approach Based On Upper Gradients,
Cambridge Studies in Advanced Mathematics, Vol. 44, Cambridge University Press,
Cambridge (2014, to appear).

\vspace{-0.23cm}
\bibitem{kz}
S. Keith and X. Zhong, The Poincar\'e inequality is an open ended
condition, Ann. of Math. (2) 167 (2008), 575-599.

\vspace{-.23cm}
\bibitem{km98} P. Koskela and P. MacManus, Quasiconformal mappings and
Sobolev spaces, Studia Math. 131 (1998), 1-17.

\vspace{-.23cm}
\bibitem{ks08} P. Koskela and E. Saksman, Pointwise characterizations of
Hardy-Sobolev functions, Math. Res. Lett. 15 (2008), 727-744.

\vspace{-.23cm}
\bibitem{lyy} Y. Lu, D. Yang and W. Yuan,
Morrey-Sobolev spaces on metric measure spaces, Potential Anal. 41 (2014), 215-243.

\vspace{-0.23cm}
\bibitem{maly1442}
L. Mal\'y, Newtonian spaces based on quasi-Banach function lattices,
Math. Scand. (to appear) or arXiv: 1210. 1442.

\vspace{-0.23cm}
\bibitem{maly1448}
L. Mal\'y, Minimal weak upper gradients in Newtonian spaces based on
quasi-Banach function lattices, Ann. Acad. Sci. Fenn. Math. 38 (2013),
727-745 .

\vspace{-0.23cm}
\bibitem{mc}
E. J. McShane, Extension of range of functions, Bull.
Amer. Math. Soc. 40 (1934), 837-842.

\vspace{-0.23cm}
\bibitem{s00}
N. Shanmugalingam, Newtonian spaces: an extension of Sobolev spaces to metric measure
spaces, Rev. Mat. Iberoamericana 16 (2000), 243-279.

\vspace{-.23cm}
\bibitem{Tu} H. Tuominen, Orlicz-Sobolev spaces on metric measure spaces,
Dissertation, University of Jyv\"askyl\"a, Jyv\"askyl\"a, 2004,
Ann. Acad. Sci. Fenn. Math. Diss. No. 135 (2004), 86 pp.

%\vspace{-.23cm}
%\bibitem{wz}
%K. Wildrick and T. Z\"urcher,
%Mappings with an upper gradient in a Lorentz space, http://www.math.jyu.fi/research/pspdf/382.pdf.

\end{thebibliography}
\end{document}